\renewcommand*\subjclass[2][2000]{%
  \def\@subjclass{#2}%
  \@ifundefined{subjclassname@#1}{%
    \ClassWarning{\@classname}{Unknown edition (#1) of Mathematics
      Subject Classification; using '1991'.}%
  }{%
    \@xp\let\@xp\subjclassname\csname subjclassname@#1\endcsname
  }%
}
\newtheorem{theorem}{Theorem}[section]
\newtheorem{lemma}[theorem]{Lemma}
\newtheorem*{lemma*}{Lemma}
\def\1ton{1,2,\ldots,n}
\theoremstyle{definition}
\newtheorem{definition}[theorem]{Definition}
\theoremstyle{remark}
\newtheorem{remark}[theorem]{Remark}
\numberwithin{equation}{section}
\newcommand{\abs}[1]{\lvert#1\rvert}
\def\XXint#1#2#3{{\setbox0=\hbox{$#1{#2#3}{\int}$}
\vcenter{\hbox{$#2#3$}}\kern-.5\wd0}}
\def\ge{\geqslant}
\begin{document}

\title[On Riesz conjugate function theorem for harmonic functions]{ On M. Riesz conjugate function theorem for harmonic functions}


\author{David Kalaj}
\address{Faculty of Natural Sciences and Mathematics, University of
Montenegro, Cetinjski put b.b. 81000 Podgorica, Montenegro}
\email{davidk@ucg.ac.me}


\footnote{2010 \emph{Mathematics Subject Classification}: Primary
47B35} \keywords{Subharmonic functions, Harmonic mappings}
\begin{abstract}
Let  $L^p(\mathbf{T})$ be the Lesbegue space of complex-valued functions defined in the unit circle $\mathbf{T}=\{z: |z|=1\}\subseteq \mathbb{C}$.
In this paper, we address the problem of finding the best constant
in  the inequality of the form:
$$\|f\|_{L^p(\mathbf{T})}\le A_{p,b}  \|(|P_+ f|^2+b| P_{-} f|^2)^{1/2}\|_{L^p(\mathbf{T})}.$$
Here $p\in[1,2]$, $b>0$, and by $P_{-} f$ and $ P_+ f$ are denoted co-analytic and analytic projection of a function $f\in L^p(\mathbf{T})$. The equality is "attained" for  a quasiconformal harmonic mapping.
The result extends a sharp version of  M. Riesz conjugate function theorem of Pichorides and Verbitsky and some well-known estimates for holomorphic functions.
\end{abstract}
\maketitle
\tableofcontents

\section{Introduction}
Let $\mathbf{U}$ denote the unit disk and $\mathbf{T}$ the unit circle in the complex plane.
For $p>1$, we define the Hardy class $\mathbf{h}^p$ as the class of harmonic mappings $f=g+\bar h$, where $g$ and $h$ are holomorphic mappings defined on the unit disk $\mathbf{U},$ so that $$\|f\|_p=\|f\|_{\mathbf{h}^p}=\sup_{0<r< 1} M_p(f,r)<\infty,$$ where $$M_p(f,r)=\left(\int_{\mathbf{T}}|f(r\zeta)|^p d\sigma(\zeta)\right)^{1/p}.$$ Here $d\sigma(\zeta)=\frac{dt}{2\pi},$ if $\zeta=e^{it}\in \mathbf{T}$. The subclass of holomorphic mappings that belongs to the class $\mathbf{h}^p$ is denoted by $H^p$.

If $f\in \mathbf{h}^p$, then it is well-known that there exists
$$f(e^{it})=\lim_{r\to 1} f(re^{it}),   a.e.$$  and $f\in L^p(\mathbf{T}).$
Then there hold
\begin{equation}\label{come}\|f\|^p_{{\mathbf{h}^p}}=\lim_{r\to 1}\int_{0}^{2\pi}|f(re^{it})|^p \frac{dt}{2\pi}= \int_{0}^{2\pi}|f(e^{it})|^p \frac{dt}{2\pi}.\end{equation}


Let $1<p<\infty$ and let $\overline p=\max\{p,p/(p-1)\}$. Verbitsky in \cite{ver} proved the following results.  If $f=u+iv\in H^p$ and $v(0)=0$, then  \begin{equation}\label{ver}\sec(\pi/(2\overline p))\|v\|_p\leq\|f\|_p,\end{equation}
and
\begin{equation}\label{1ver}\|f\|_p\leq\csc(\pi/(2\overline p))\|u\|_p,\end{equation}
and both estimates are sharp. Those results improve the sharp inequality \begin{equation}\label{pico}\|v\|_p\leq\cot(\pi/(2\overline p))\|u\|_p\end{equation} found by S.  Pichorides (\cite{pik}). For some related results see \cite{essen, studia, graf, verb2}.  Then those results have been extended by the author in \cite{tams} by proving the sharp inequalities \begin{equation}\label{nes2}\left(\int_{\mathbf{T}}\left({|g|^2+|h|^2}\right)^{\frac{p}{2}}\right)^{1/p}\le c_p\left(\int_{\mathbf{T}} |g+\bar h|^p\right)^{1/p}\end{equation}
and
\begin{equation}\label{nes3}\left(\int_{\mathbf{T}} |g+\bar h|^p\right)^{1/p}\le d_p\left(\int_{\mathbf{T}}\left({|g|^2+|h|^2}\right)^{\frac{p}{2}}\right)^{1/p},\end{equation} where
$$c_p=\left(\sqrt{2}\sin\frac{\pi}{2\bar p}\right)^{-1},\text{ and }d_p= \sqrt{2}\cos\frac{\pi}{2\bar p},$$ and $f=g+\bar h\in \mathbf{h}^p$, $\Re(h(0)g(0))=0$, $1<p<\infty$. Then inequalities \eqref{nes2} and \eqref{nes3} imply \eqref{pico}, \eqref{1ver} and \eqref{ver}. As a byproduct, the author by using \eqref{nes2} proved a Hollenbeck-Verbitsky conjecture for the case $s=2$.

Further, the inequality \eqref{nes2} has been extended by Markovi\'c and Melentijevi\'c in \cite{melmar} by finding the best constant $c_{p,s}$ in the following inequality \begin{equation}\label{nes4}\left(\int_{\mathbf{T}}(|g|^s+|h|^s)^{p/s}\right)^{1/p}\le c_{p,s}\left(\int_{\mathbf{T}} |g+\bar h|^p\right)^{1/p},\end{equation}  for certain range of parameters $(p,s)$ including the case $(p,2)$. Then \eqref{nes4} has been extended by Melentijevi\'c in \cite{mel}, proving in this way a Hollenbeck-Verbitsky conjecture for the case $s<\sec^2(\pi/(2p))$, $p\le 4/3$ or $p\ge 2$. So this problem remains open for the case $p\in[4/3,2]$.

We will consider another related problem. We will extend the result of Verbitsky \eqref{ver} and the result of the author \eqref{nes3} (for the case $1\le p\le 2$). For a given $b>0$ and $p\in[1,2]$ we will find the best constant in the following inequality  \begin{equation}\label{nes32}\left(\int_{\mathbf{T}} |g+\bar h|^p\right)^{1/p}\le A_{p,b}\left(\int_{\mathbf{T}}\left({|g|^2+b|h|^2}\right)^{\frac{p}{2}}\right)^{1/p}.\end{equation}
The previous estimate is equivalent to the following estimate:
$$\|f\|_{p}\le A_{p,b}\|(| P_+[f]|^2+|bP_{-}[f]|^2)^{\frac{p}{2}}\|_p,$$ where $f\in L^p(\mathbf{T})$ and $P_{+}$ and $P_{-}$ are analytic and co-analytic projection of a function $$f(e^{it})=\sum_{k=-\infty}^{+\infty} c_k e^{ik t},$$ i.e. $$P_{+}[f]=\sum_{k=0}^{+\infty} c_k e^{ik t}, \text{and} \ \  P_{-}[f]=\sum_{k=1}^{+\infty} c_{-k} e^{-ik t}.$$ Here $$c_k = \frac{1}{2\pi}\int_0^{2\pi} f(e^{it})e^{-ki t}dt.$$ We remark that all the inequalities \eqref{ver}, \eqref{pico}, \eqref{nes2}, \eqref{nes3}, \eqref{nes4} can be formulated by using analytic and co-analytic projection.
\section{Main results}
The main result is the following theorem

\begin{theorem}\label{Tao}
Let $1\le p\le 2$ and assume that $b>0$.
Then we have the following sharp inequality
\begin{equation}\label{nesit}\left(\int_{\mathbf{T}} |g+\bar h|^p\right)^{1/p}\le A_{p,b}\left(\int_{\mathbf{T}}(|g|^2+b  |h|^2)^{\frac{p}{2}}\right)^{1/p},\end{equation}
for $f=g+\bar h\in \mathbf{h}^p$ with $\Re(g(0)h(0))\le  0$ where
$$A_{p,b}=\left(\frac{1+b+\sqrt{1+b^2+2 b \cos \left[\frac{2 \pi }{p}\right]}}{2 b}\right)^{1/2}.$$
The equality is "attained" for $f_c$, when $c\uparrow 1/p$ and

\begin{equation}\label{fbb}f_c(z) =  \left(\frac{1+z}{1-z}\right)^c -\bar r\left(\frac{1+\bar z}{1-\bar z}\right)^c, \end{equation} and
$$\bar r=\frac{\left(-1+b-\sqrt{1+b^2+2 b \cos\left[\frac{2 \pi }{p}\right]}\right) \sec\left(\frac{\pi }{p}\right)}{2 b}, $$ for $p<2$. For $p=2$   \begin{equation}\label{fbb0}f_c(z) = \left\{
                                                            \begin{array}{ll}
                                                              \left(\frac{1+\bar z}{1-\bar z}\right)^c, & \hbox{for $b\le 1$;} \\
                                                              \left(\frac{1+z}{1-z}\right)^c, & \hbox{for $b>1$.}
                                                            \end{array}
                                                          \right.
 \end{equation}
\end{theorem}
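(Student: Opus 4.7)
The plan has two parts: verifying sharpness of the constant $A_{p,b}$ on the extremal family $f_c$, and proving the inequality itself via a subharmonic-minorant (Bellman-type) argument in the spirit of the author's approach in \cite{tams} for the case $b=1$.

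For sharpness, I would plug $f_c=\phi(z)^c-\bar r\,\phi(\bar z)^c$ into both sides of \eqref{nesit} and exploit the key boundary identity $\phi(e^{i\theta})=i\cot(\theta/2)$, which is purely imaginary. This gives $g(e^{i\theta})=|\cot(\theta/2)|^c e^{\pm ic\pi/2}$ and $h(e^{i\theta})=-\bar r\,|\cot(\theta/2)|^c e^{\pm ic\pi/2}$, so that on the unit circle
\[
|g|^2+b|h|^2=|\cot(\theta/2)|^{2c}(1+b\bar r^2),\qquad |g+\bar h|^2=|\cot(\theta/2)|^{2c}\bigl(1+\bar r^2-2\bar r\cos(c\pi)\bigr).
\]
Both $L^p$ norms contain the same divergent factor $\bigl(\int|\cot(\theta/2)|^{cp}d\sigma\bigr)^{1/p}$ as $c\uparrow 1/p$, and their ratio reduces to the purely scalar expression $[(1+\bar r^2-2\bar r\cos(c\pi))/(1+b\bar r^2)]^{1/2}$. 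One-variable calculus produces the quadratic $b\bar r^2\cos(c\pi)+(1-b)\bar r-\cos(c\pi)=0$, whose root gives the formula for $\bar r$ stated in the theorem; evaluating the optimal ratio at $c=1/p$ yields $A_{p,b}$ via the double-angle identity $2\cos^2(\pi/p)-1=\cos(2\pi/p)$. The degenerate cases $p=2$ with $b\le 1$ or $b>1$ correspond to $\bar r=\infty$ or $\bar r=0$ and reproduce the piecewise formula \eqref{fbb0}.

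To prove the inequality, my plan is to construct an auxiliary function $\Phi\colon\C^2\to\R$, homogeneous of degree $p$, such that
\begin{enumerate}[(a)]
\item $\Phi(u,v)\le A_{p,b}^p(|u|^2+b|v|^2)^{p/2}-|u+v|^p$ pointwise on $\C^2$,
\item $z\mapsto \Phi(g(z),\overline{h(z)})$ is subharmonic on $\mathbf U$ for every pair of holomorphic $g,h\colon\mathbf U\to\C$,
\item $\Phi(g(0),\overline{h(0)})\ge 0$ whenever $\Re(g(0)h(0))\le 0$.
\end{enumerate}
Given these, the mean-value inequality together with (a) and (c) immediately yields \eqref{nesit}. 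The natural building block is $\Phi(u,v)=\Re\bigl[\lambda\,(u\bar v)^{p/2}\bigr]$ with a complex constant $\lambda$ and a principal branch, because under the substitution $(u,v)=(g,\bar h)$ we have $u\bar v=gh$, a holomorphic function of $z$, so $\Re[\lambda(gh)^{p/2}]$ is harmonic in $z$; this makes (b) automatic with equality in the mean-value formula. Condition (c) then becomes a restriction on $\arg\lambda$ tied to the argument of $g(0)h(0)$; when a single term does not suffice to control all arguments permitted by $\Re(g(0)h(0))\le 0$, appropriate combinations with $\Re[\lambda_1 g^p]$ and $\Re[\lambda_2 h^p]$ are available, all of which are harmonic in $z$ and hence compatible with (b).

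The main technical obstacle is the pointwise bound (a). By homogeneity I may fix $|u|=1$; setting $t=|v|$ and $\gamma=\arg u-\arg v$, the scalar form of (a) is
\[
(1+t^2+2t\cos\gamma)^{p/2}+\text{(terms from $\Phi$)}\le A_{p,b}^p(1+bt^2)^{p/2}.
\]
Optimizing in $\gamma$ by differentiation leads to a critical configuration whose cosine satisfies a relation ultimately equivalent to the quadratic in $\bar r$ from Step 1; matching with the extremal $(t,\gamma)=(\bar r,(c-1)\pi)$ at $c=1/p$ forces equality and pins down $A_{p,b}$. Any branch/integrability issues for $(gh)^{p/2}$ (which only fails to be in $L^1$ at the extremal itself) are handled by working first with $g,h$ strictly inside $\mathbf h^p$ and passing to the limit. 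The endpoints are easy: $p=1$ follows pointwise from $|u+v|\le|u|+|v|\le\sqrt{1+1/b}\sqrt{|u|^2+b|v|^2}$, and $p=2$ from $\int|g+\bar h|^2\,d\sigma=\|g\|_2^2+\|h\|_2^2+2\Re(g(0)h(0))$ combined with the explicit $A_{2,b}^2=\max(1,1/b)$.
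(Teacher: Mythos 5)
Your overall strategy is the same as the paper's (a plurisubharmonic minorant plus the sub-mean-value inequality at the origin, with sharpness tested on the family $f_c$), and your sharpness computation and the endpoints $p=1,2$ are fine. However, the two central steps are not actually carried out. First, condition (b) is not ``automatic'' for $\Phi(u,v)=\Re\bigl[\lambda\,(u\bar v)^{p/2}\bigr]$: for $1\le p<2$ the map $w\mapsto w^{p/2}$ is multivalued, and for arbitrary holomorphic $g,h$ the product $gh$ may vanish and wind around the origin, so $\Re[\lambda (gh)^{p/2}]$ need not admit a single-valued harmonic determination on $\mathbf U$; working with functions ``strictly inside $\mathbf{h}^p$'' fixes integrability but not this branch problem, and the same objection applies to the auxiliary terms $\Re[\lambda_1 g^p]$, $\Re[\lambda_2 h^p]$ you propose for (c). The paper's construction is designed precisely to avoid this: it replaces the single harmonic branch by $\mathcal{G}_p(z,w)=\max\{\Re((-zw)^{p/2}),\Re((-\overline{zw})^{p/2})\}$, i.e.\ the function $H(\zeta)=|\zeta|^{p/2}\cos\bigl(\tfrac p2(\pi-|\arg\zeta|)\bigr)$ evaluated at $\zeta=zw$, which is globally well defined, continuous and subharmonic on all of $\C$ (the cut and the point $0$ are checked separately), so that $z\mapsto\mathcal{G}_p(g(z),h(z))$ is subharmonic for \emph{every} holomorphic pair, and the sub-mean-value inequality together with $\Re(g(0)h(0))\le 0$ and $p\le 2$ gives the sign needed at the origin.

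Second, and more seriously, the pointwise bound (a) — the heart of the theorem — is not proved in your outline. Differentiating in $\gamma$ and matching a critical configuration with the extremal $(t,\gamma)=(\bar r,(c-1)\pi)$ only shows that equality \emph{can} occur and identifies the candidate constants; it does not show that, with the specific minorant coefficient (the paper's $E_{p,b}$), the difference is nonnegative for \emph{all} $(t,\gamma)$, i.e.\ that the matched critical point is a global minimum of a genuinely two-variable function. In the paper this verification is Lemma~\ref{lemama}: after the substitutions $x=-\cos\frac{\pi}{p}-\cos t$ and the hyperbolic change of variables, it requires Lemma~\ref{tt}, a concavity step for $s\mapsto s^{p/2}$, and a discriminant analysis of a quadratic in $x$, supported by several pages of Taylor-series and convexity estimates following Tao's method. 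Nothing in your proposal substitutes for this work, so as it stands the inequality \eqref{nesit} for $1<p<2$ is not established.
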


\begin{remark}
a) Theorem~\ref{Tao} remains true if the condition  $\Re(g(0)h(0))\le 0$ is replaced by more general condition $$\frac{\pi(p-1)}{p}\le |\mathrm{arg}(g(0)h(0))|\le \pi.$$ In the previous inequality we considered the branch of $\arg$ with  $\arg(x)\in (-\pi,\pi]$.

b) Theorem~\ref{Tao} reduces to the inequality \eqref{ver} by Verbitsky for $b=1$ and $g=-h$. Namely $h-\bar h=2i\mathrm{Im}\, h$ and $\mathrm{Im}(h(0))=0$ implies $\Re(-h^2(0))\le 0$.   Theorem~\ref{Tao} coincides with the author's inequality \eqref{nes3} for $b=1$ and for the case $p\in[1,2]$. In the last case  $D_{p,b}=d_p$.

c) Concerning the analytic version of the Riesz type theorem,  the following example suggested by A. Calderon (see \cite{pik}) shows that \eqref{ver} and \eqref{1ver} are sharp. Namely if $g_c(z)=\left(\frac{1+z}{1-z}\right)^{c}$, $\abs{\arg\frac{1+z}{1-z}}\le \frac{\pi}{2}$, and $c<\frac{1}{p}$, then $g_c=u +iv\in h^p$. Further $|g_c|=\csc \frac{c\pi}{2} |v|$ almost everywhere on $\mathbf{T}$. In contrast to the analytic case where the  univalent (conformal injective) function  $g$ of the unit disk of a certain angle is the minimizer, for harmonic version of this theorem, the minimizer  $f_b$  is a $\min\{\bar r ,\bar r^{-1}\} -$quasiconformal harmonic mapping of the unit disk onto an angle, provided that $b\neq 1$. The case $b=1$ implies $\bar r=1$, and then $f_b$ from \eqref{fbb} coincides with $2i\mathrm{Im}( g_c)$.
\end{remark}

\section{Strategy of the proof}
As the authors of the paper did in \cite{verb1}, (see also \cite{tams, melmar, mel}) we use "pluri-subharmonic minorant".
\begin{definition}
An upper semi-continuous real function $u$ is called subharmonic in an open set $\Omega$ of the complex plane, if for every compact subset $K$ of $\Omega$ and every harmonic function $f$ defined on $K$, the inequality $u(z)\le f(z)$ for $z\in \partial K$ implies that $u(z)\le f(z)$ on $K$.
\end{definition}
A property which characterizes the subharmonic mappings is the sub-mean value property which states that if $u$ is a subharmonic function defined on a domain $\Omega$, then for every closed disk $\overline{D(z_0,r)}\subset \Omega$, we have the inequality $$u(z_0)\le \frac{1}{2\pi r}\int_{|z-z_0|=r} {u(z)|dz|}.$$
\begin{definition} \cite{lars} A function $u$ defined in an open set $\Omega\subset \mathbf{C}^n$ with values
in $[ - \infty, +\infty)$ is called plurisubharmonic if
\begin{enumerate}
\item $u$ is semicontinuous from above;
\item For arbitrary  $z,w \in\mathbf{C}^n$, the function $t\to  u(z + tw)$ is
subharmonic in the part $\mathbf{C}$ where it is defined.
\end{enumerate}

\end{definition}

\begin{definition} A pluri-subharmonic function $f$ is called a pluri-subharmonic minorant of $g$ on $\Omega$ if $f(z,w)\le g(z,w)$ for $(z,w)\in \Omega\subset\mathbf{C}^2$, and $f(z_0,w_0)=g(z_0,w_0)$, for a point $(z_0,w_0)\in \Omega.$\end{definition}

Let $p\in[1,2]$ and $b>0$.  The main task in the proof of Theorem~\ref{Tao} is to find optimal positive constants $D_{p,b}$, $E_{p,b}$,  and pluri-subharmonic functions $\mathcal{G}_p(z,w)$ (Lemma~\ref{nice1}) for $z,w\in \mathbf{C}$, vanishing for $z=0$ or $w=0$, so that the inequality
\begin{equation}\label{calf}
 |w+\bar z|^p\le (|w|^2+ b |z|^2)^{\frac{p}{2}} D_{p,b} - E_{p,b} \mathcal{G}_p(z,w)
\end{equation}
 is sharp.
\section{Proof of Theorem~\ref{Tao}}
The main content of the proof is the following lemma which we prove in the next section.

Let $\zeta=\rho e^{it}$, $t\in(-\pi,\pi]$, and consider the following branches \begin{equation}\label{br1}(-{\zeta})^{\frac{p}{2}}:=|\zeta|^{p/2}e^{ip/2(\pi+t)}\end{equation}
\begin{equation}\label{br2}(-{\bar \zeta})^{\frac{p}{2}}:=|\zeta|^{p/2}e^{ip/2(\pi-t)}.\end{equation}
Then for $\zeta=\rho e^{it}$, $t\in[-\pi,\pi]$ consider
\begin{equation}\label{rez}k(\rho,t):=\max\{\Re((-{\zeta})^{\frac{p}{2}}),\Re((-\bar{\zeta})^{\frac{p}{2}})\}=\rho^{\frac{p}{2}}\cos(\frac{p}{2}(\pi-|t|)).\end{equation}
Further define $$K(\rho,t)=\left\{
                            \begin{array}{ll}
                              k(\rho,t), & \hbox{if $|t|\le \pi$;} \\
                              k(\rho,t-2\pi), & \hbox{if $t\in [\pi,2\pi]$;} \\
                              k(\rho, t+2\pi), & \hbox{if $t\in[-2\pi,-\pi]$.}
                            \end{array}
                          \right.$$

Then for $\zeta=\rho e^{it}\in \mathbb{C}$  define the function $H(\zeta)=K(\rho,t)$. This function is well-defined and continuous in $\mathbb{C}$ because $K(\rho,\pi)=K(\rho,-\pi)$.

\begin{lemma}\label{nice1}
The function  $H(\zeta)$ is subharmonic on $\mathbf{C}$. For $w=|w| e^{it}$, $t\in(-\pi,\pi]$ and $z=|z|e^{is}$, $s\in(-\pi,\pi]$,  define $\mathcal{G}_p(z,w)=K(|z|\cdot |w|, t+s)$.
Then $\mathcal{G}_p$ is pluri-subharmonic on $\mathbf{C}^2.$
For every two complex numbers $z$ and $w$ and $p\in[1,2)$, we have
\begin{equation}\label{fsh} |w+\bar z|^p\le D_{p,b}(|w|^2+b|z|^2)^{\frac{p}{2}} - E_{p,b} \mathcal{G}_p(z,w),\end{equation}

where
\begin{equation}\label{aba}D_{p,b}=\left(\frac{1+b+\sqrt{S_b}}{2 b}\right)^{\frac{p}{2}},\end{equation}
and \begin{equation}\label{abb} E_{p,b}=2^{2-\frac{p}{2}} \left(-\frac{\left(S_b+(1+b) \sqrt{S_b}\right) \sec\left(\frac{\pi }{p}\right)}{b}\right)^{\frac{1}{2} (-2+p)} \sin \frac{\pi}{p}.\end{equation} Here we use the shorthand notation \begin{equation}\label{Sb}S_b:=1+b^2+2 b \cos \left[\frac{2 \pi }{p}\right].\end{equation}
Furthermore the equality in \eqref{fsh} is attained for $p<2$ if and if $${|w|}=-\frac{1}{2} \left(-1+b+\sqrt{S_b}\right) \sec\left(\frac{\pi }{p}\right){|z|}$$ and $\arg(wz)=\frac{-\pi}{p}\mod \pi$.
\end{lemma}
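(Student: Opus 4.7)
The plan is to establish the three claims of Lemma~\ref{nice1} in sequence: subharmonicity of $H$, plurisubharmonicity of $\mathcal{G}_p$, and the pointwise inequality~\eqref{fsh}.

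For subharmonicity of $H$, I would observe that with the branches of~\eqref{br1}--\eqref{br2} the functions $(-\zeta)^{p/2}$ and $(-\bar\zeta)^{p/2}$ are respectively holomorphic and antiholomorphic on $\mathbf{C}\setminus(-\infty,0]$, so their real parts are harmonic there and $k(\rho,t)$ is subharmonic as a pointwise maximum. The closed form $H(\rho e^{it})=\rho^{p/2}\cos\bigl(\tfrac{p}{2}(\pi-|t|)\bigr)$ shows $H$ is harmonic on each open half-plane; at $t=\pm\pi$ the one-sided angular derivatives both vanish (since $\sin(0)=0$), so $H$ is $C^1$ across the branch cut and therefore harmonic there, whereas across the positive real axis ($t=0$) the angular derivative has a positive jump of $p\rho^{p/2}\sin(p\pi/2)\ge 0$, producing a nonnegative line-measure contribution to $\Delta H$. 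Combined with continuity this yields subharmonicity on all of $\mathbf{C}$. Plurisubharmonicity of $\mathcal{G}_p$ then follows by noting that $K$ is $2\pi$-periodic in its angular argument, so $\mathcal{G}_p(z,w)=H(zw)$; on any complex line $\{(z_0+ta,w_0+tb):t\in\mathbf{C}\}$ the product $(z_0+ta)(w_0+tb)$ is a polynomial in $t$, and the composition of subharmonic $H$ with this holomorphic map remains subharmonic, establishing plurisubharmonicity by definition.

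For the inequality \eqref{fsh}, writing $w=|w|e^{i\alpha}$, $z=|z|e^{i\beta}$ and $\theta=\alpha+\beta$, both sides depend only on $(|w|,|z|,\theta)$ and are positively $p$-homogeneous, so I would normalize $|z|=1$, set $\rho=|w|$, and reduce to proving
\begin{equation*}
\Psi(\rho,\theta):=D_{p,b}(\rho^2+b)^{p/2}-E_{p,b}\rho^{p/2}\cos\bigl(\tfrac{p}{2}(\pi-\theta)\bigr)-(\rho^2+1+2\rho\cos\theta)^{p/2}\ge 0
\end{equation*}
for $\rho\ge 0$, $\theta\in[0,\pi]$, the case $\theta\in[-\pi,0]$ following from the $\theta\leftrightarrow-\theta$ symmetry. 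The calibration of the constants is transparent at the candidate critical point $\theta^\ast=\pi-\pi/p$: here $\cos(\tfrac{p}{2}(\pi-\theta^\ast))=0$ and $\Psi(\rho,\theta^\ast)\ge 0$ collapses to the downward quadratic $(1-A)\rho^2-2\rho\cos(\pi/p)+(1-bA)\le 0$ with $A:=D_{p,b}^{2/p}$, whose maximum must vanish. The tangency condition yields $bA^2-(b+1)A+\sin^2(\pi/p)=0$, whose larger root is $A=(1+b+\sqrt{S_b})/(2b)$, recovering~\eqref{aba} and pinning down the minimizer $\rho^\ast=-\cos(\pi/p)/(A-1)$, which is the stated extremal value. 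Imposing $\partial_\theta\Psi(\rho^\ast,\theta^\ast)=0$ then forces $E_{p,b}$ as in~\eqref{abb} after simplification via $S_b+(1+b)\sqrt{S_b}=2bA\sqrt{S_b}$.

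The main obstacle is upgrading this local calibration to the global inequality $\Psi\ge 0$. The boundary values $\Psi(0,\theta)=D_{p,b}b^{p/2}-1\ge 0$ and the asymptotic $\Psi(\rho,\theta)\sim(D_{p,b}-1)\rho^p$ as $\rho\to\infty$ are both favorable (holding since $A\ge\max\{1,b^{-1}\}$). For fixed $\theta$ I plan to solve the Euler equation $\partial_\rho\Psi=0$ for a unique interior minimizer $\rho(\theta)$, then study the envelope $\psi(\theta):=\Psi(\rho(\theta),\theta)$ on $[0,\pi]$ and prove $\psi\ge\psi(\theta^\ast)=0$ by locating its critical points through $\partial_\theta\Psi=0$. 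The algebra is substantial because the Euler equation couples $D_{p,b}\rho(\rho^2+b)^{p/2-1}$, $\rho^{p/2-1}\cos(\tfrac{p}{2}(\pi-\theta))$, and $(\rho+\cos\theta)(\rho^2+1+2\rho\cos\theta)^{p/2-1}$, whose interplay is mediated by the trigonometric identity $1+\cos(2\pi/p)=2\cos^2(\pi/p)$ underlying every $S_b$-manipulation. Once $\Psi\ge 0$ with equality only at $(\rho^\ast,\theta^\ast)$ is established, the claimed equality case $|w|/|z|=\rho^\ast$ and $\arg(wz)\equiv-\pi/p\pmod\pi$ (since $\theta^\ast=\pi-\pi/p\equiv-\pi/p\pmod\pi$) follows directly.
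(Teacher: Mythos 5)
Your handling of the first two claims is essentially sound and close to the paper's: $H$ is the pointwise maximum of the real parts of the two branches $(-\zeta)^{p/2}$, $(-\bar\zeta)^{p/2}$, glued harmonically across the negative axis and with a nonnegative kink of size $p\rho^{p/2}\sin(p\pi/2)$ across the positive axis; and the observation $\mathcal{G}_p(z,w)=H(zw)$ plus composition with the holomorphic map $(z,w)\mapsto zw$ gives plurisubharmonicity. One small caveat: subharmonicity at the origin does not follow from ``continuity'' alone; you need either the sub-mean inequality at $0$ (the paper checks that the circular mean of $H$ equals $\frac{2\rho^{p/2}}{\pi p}\sin\frac{p\pi}{2}\ge 0=H(0)$) or a removable-singularity theorem for subharmonic functions bounded above near a polar set. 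Your calibration of the constants is also correct and illuminating: on the slice $\theta^*=\pi-\pi/p$ the $\mathcal{G}_p$-term vanishes, the tangency of the resulting quadratic in $\rho$ produces $A=(1+b+\sqrt{S_b})/(2b)$, hence \eqref{aba}, the extremal ratio $R$, and then \eqref{abb} from $\partial_\theta\Psi(\rho^*,\theta^*)=0$.

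The genuine gap is the global inequality $\Psi(\rho,\theta)\ge 0$ for all $\rho\ge 0$ and $\theta\in[0,\pi]$, which is the entire substance of \eqref{fsh}. Your calibration settles only the single slice $\theta=\theta^*$ and the boundary behaviour in $\rho$; for every other $\theta$ you offer a plan (``solve $\partial_\rho\Psi=0$ for a unique interior minimizer $\rho(\theta)$ and study the envelope $\psi(\theta)$'') without establishing uniqueness of the critical point, that an interior minimizer exists for all $\theta$, or that the envelope's minimum sits at $\theta^*$. For $p\in[1,2)$ the Euler equation mixes the three incommensurable powers $(\rho^2+b)^{p/2-1}$, $\rho^{p/2-1}$ and $(\rho^2+1+2\rho\cos\theta)^{p/2-1}$, so $\rho(\theta)$ is not available in closed form and nothing in your outline excludes extra critical points or a negative value of $\psi$ away from $\theta^*$; you yourself flag that ``the algebra is substantial,'' but that algebra is exactly the theorem. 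The paper's proof spends all of its effort here: it rewrites $b$ through $R$, passes to hyperbolic variables $r+1/r=2\cosh\alpha$, $R+1/R=2\cosh\beta$, replaces the trigonometric term by a quadratic minorant in $x=-\cos(\pi/p)-\cos t$ (Lemma~\ref{tt}, a refinement of Tao's argument whose proof alone requires several pages of Taylor-series estimates), then uses concavity of $s\mapsto s^{p/2}$ to reduce everything to a quadratic inequality $Wx^2+Vx+U>0$ settled by a negative discriminant, with a final case split $\gamma\ge 0$ versus $\gamma<0$ and further one-variable estimates such as \eqref{sabato}. Until you carry out your envelope analysis at a comparable level of rigor (or import an argument of this kind), the main inequality of Lemma~\ref{nice1} remains unproved in your write-up, and with it the equality characterization.
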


\begin{proof}[Proof of inequality of Theorem~\ref{Tao}]

We use  Lemma~\ref{nice1}. Assume that $f=g+\bar h$, where $g$ and $h$ are holomorphic functions on the unit disk. Then from Lemma~\ref{nice1}, we have $$ |g(z)+\overline{h(z)}|^p \le D_{p,b} (|g(z)|^2+b |h(z)|^2)^{\frac{p}{2}} - E_{p,b} \mathcal{G}_p(g(z),h(z)),$$ where
$D_{p,b}$ and $E_{p,b}$ are given in \eqref{aba} and \eqref{abb}.
Then $$\int_{\mathbf{T}} |g(z)+\overline{h(z)}|^p\le D_{p,b} \int_{\mathbf{T}} (|g(z)|^2+b |h(z)|^2)^{\frac{p}{2}} - E_{p,b} \int_{\mathbf{T}}\mathcal{G}_p(g(z),h(z)).$$
Let $\theta=\arg(g(0)h(0))$. As $\mathcal{P}_p(z)= \mathcal{G}_p(g(z),h(z))$ is subharmonic for every $p\ge 1$, by sub-mean inequality we have that $$\int_{\mathbf{T}}\mathcal{G}_p(g(z),h(z))\ge \mathcal{G}_p(g(0),h(0))=|g(0)h(0)|^p\cos\left[p\frac{\pi-|\theta|}{2}\right]\ge 0,$$ because $\theta\in[\pi/2,3\pi/2]$ and $p\in[1,2]$.
\end{proof}
\begin{proof}[Proof of sharpness of  Theorem~\ref{Tao}]

Let $\beta \in[0,1]$ and define $$ f_\beta=g+\bar h=\beta \left(\frac{1+z}{1-z}\right)^c+(\beta-1)\left(\frac{1+\bar z}{1-\bar z}\right)^c,\, c<1/p.$$ Notice that $\mathrm{arg}(g(0)h(0))=\pi$, or $g(0)h(0)=0$ and thus  $\mathrm{Re}(g(0)h(0))\le 0$. Let  $$Z(\beta):={\left(\int_{\mathbf{T}} |g+\bar h|^p\right)}.$$ Then $$Z(\beta) =2 \pi  (1+2 (-1+\beta) \beta+2 (-1+\beta) \beta \cos[c \pi ])^{\frac{p}{2}} \sec\left[\frac{c p \pi }{2}\right].$$
Further let $$X(\beta) :={\left(\int_{\mathbf{T}} (|g|^2+b |h|^2)^{\frac{p}{2}}\right)}.$$
Then  $$X(\beta)= (1+b)^{\frac{p}{2}}\int_0^{2\pi} \left|\frac{1+e^{it}}{1-e^{it}}\right|^{cp} dt= 2 \pi(1+b)^{\frac{p}{2}}   \sec\left[\frac{c p \pi }{2}\right].$$
Now for $c=1/p$ and for $$\beta=\frac{\left(-1+b+2 b \cos \frac{\pi}{p}+\sqrt{S_b}\right) \sec\left[\frac{\pi }{2 p}\right]^2}{4 (-1+b)},$$ we have $$K(\beta,b,c)=\frac{Z(\beta)}{X(\beta)}={D(p,b)}.$$

We need also to check that $\beta\in [0,1]$. But after strighforward calculation this is equivalent with the inequality $|b-1|\cos\frac{\pi}{p}<0$ which is true for $b\ne 1$ and $p\neq 2$. For $b=1$ we obtain $\beta = 1/2$. If $b\neq 1$, then $\kappa=\min\{\beta/(1-\beta),(1-\beta)/\beta\} \in(0,1)$ and $f$ is quasiconformal harmonic mapping of the unit disk onto an angle.

\end{proof}
\section{Proof of Lemma~\ref{nice1}}
Prove first the subharmonicity of $H$. Let $z=re^{i\theta},$ $\theta\in(-\pi,\pi]$. Notice that \begin{equation}\label{form}\max\{r^{p/2} \cos[p/2(\pi -\theta)],r^{p/2} \cos[p/2(\pi+\theta)]\}=r^{p/2}\cos(\frac{p}{2}(\pi-\abs{\theta}).\end{equation} Thus $$H(z)=r^{p/2}\cos(\frac{p}{2}(\pi-\abs{\theta})$$ is subharmonic as a maximum of harmonic functions \begin{equation}\label{br3} \Re((-{z})^{\frac{p}{2}}), \ \ \Re((-\bar{z})^{\frac{p}{2}})\end{equation} for $z\in \mathbb{C}\setminus (-\infty,0]$. But the same formula \eqref{form} hold for $\theta\in (-\pi/p-\pi/2, \pi/p+\pi/2)\supseteq [-\pi,\pi]$, for $1\le p<2$, and thus $H$ is subharmonic as the maximum of the branches of the same  harmonic functions \eqref{br3}  defined in $\mathrm{Re}(z)\le 0$. Thus $H$ is subharmonic in $\mathbb{C}\setminus \{0\}$.

The subharmonicity at $z=0$ is verified by proving sub-mean inequality:
$$\frac{1}{2r\pi}\int_{-\pi}^{\pi}H(re^{it})dt =\frac{4 \sin\left[\frac{p \pi }{2}\right]}{2r\pi p}\ge H(0)=0.$$ For a related similar statement see \cite[Remark~2.3]{verb1}.

Further, let  $r=|w|/|z|$ for $z\neq 0$  and $t=\mathrm{arg}(zw)$. Then Lemma~\ref{nice1} follows from the following lemma.

\begin{lemma}\label{lemama} For every $r\ge 0$ and $t\in[0,\pi]$ and $b>0$ we have
\begin{equation}\label{compG}G(r,b)=-r^{\frac{p}{2}} X \cos \left[\frac{1}{2} p (\pi -t)\right]-\left(1+r^2+2 r \cos t\right)^{\frac{p}{2}}+Y \left(b+r^2 \right)^{\frac{p}{2}}\ge 0\end{equation} where
$$X= 2^{2-\frac{p}{2}} \left(-\frac{\left(S_b+(1+b) \sqrt{S_b}\right) \sec\left(\frac{\pi }{p}\right)}{b}\right)^{\frac{1}{2} (-2+p)} \sin \frac{\pi}{p}$$ and
$$Y=\left(\frac{1+b+\sqrt{S_b}}{2 b}\right)^{\frac{p}{2}}.$$
The minimum is zero and it is attained  for $t=\pi-\pi/p$ and $$r=R:=-\frac{1}{2} \left(-1+b+\sqrt{S_b}\right) \sec\left(\frac{\pi }{p}\right)$$
\end{lemma}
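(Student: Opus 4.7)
My plan has three stages: (i) show $G \ge 0$ along the slice $t = t^* := \pi - \pi/p$ with equality iff $r = R$, (ii) check $(R, t^*)$ is a critical point of $G$, and (iii) prove global positivity. The first stage is algebraic and clean, the second is a first-order computation verifying that the defining formulas of $X = E_{p,b}$ and $Y = D_{p,b}$ are precisely the optimality conditions, and the third, which is the main obstacle, requires controlling $G$ across the whole domain $[0,\infty) \times [0,\pi]$.

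For stage (i), observe that $p(\pi - t^*)/2 = \pi/2$, so the cosine term drops out and
\[
G(r, t^*) = Y (b + r^2)^{p/2} - (1 + r^2 - 2 r \cos(\pi/p))^{p/2}.
\]
Raising both non-negative sides to the $(2/p)$-th power reduces this to the quadratic
\[
(Y^{2/p} - 1) r^2 + 2 \cos(\pi/p)\, r + (Y^{2/p} b - 1) \ge 0.
\]
A direct calculation using $Y^{2/p} = (1 + b + \sqrt{S_b})/(2b)$ together with \eqref{Sb} gives $(Y^{2/p} - 1)(Y^{2/p} b - 1) = \cos^2(\pi/p)$, so the discriminant vanishes and the unique double root is $r = R$. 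For stage (ii), the derivative
\[
\partial_t G(r, t) = -\tfrac{p}{2} X r^{p/2} \sin\!\left[\tfrac{p(\pi - t)}{2}\right] + p r \sin t \, (1 + r^2 + 2 r \cos t)^{p/2 - 1}
\]
evaluated at $(R, t^*)$, using $\sin(\pi/2) = 1$, $\sin t^* = \sin(\pi/p)$, and the identity $1 + R^2 - 2 R \cos(\pi/p) = Y^{2/p}(b + R^2)$ from stage (i), reduces to an equation for $X$ that matches exactly the formula \eqref{abb}; stationarity in $r$ is automatic from stage (i).

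The main obstacle is stage (iii). Boundary and asymptotic checks are straightforward: $G(0, t) = Y b^{p/2} - 1 \ge 0$ because $\sqrt{S_b} \ge 1 - b$ squares to $S_b - (1 - b)^2 = 4 b \cos^2(\pi/p) \ge 0$, while $G(r, t) \sim r^p (Y - 1) \to +\infty$ as $r \to \infty$ since $Y > 1$. Hence for each fixed $t$ the map $r \mapsto G(r, t)$ attains its minimum at a finite interior critical point, and the task is to show the envelope $\mu(t) := \min_{r \ge 0} G(r, t)$ satisfies $\mu \ge 0$ with equality only at $t = t^*$. My plan is to compute the Hessian of $G$ at $(R, t^*)$ and show it is positive semi-definite, with its degenerate direction matching the equality locus of stage (i); combined with the boundary and asymptotic control this rules out any other global minimum. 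Alternatively, solving $\partial_r G(r, t) = 0$ implicitly as $r = r_*(t)$ and bounding $G(r_*(t), t)$ from below using the identity $(Y^{2/p} - 1)(Y^{2/p} b - 1) = \cos^2(\pi/p)$ gives a slice-by-slice route; either way, the challenge is the tight interplay between the constants that forces $\mu$ to vanish at exactly one point.
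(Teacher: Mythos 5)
Your stages (i) and (ii) are fine, but they only establish the easy part of the lemma: that the cosine term vanishes on the slice $t^*=\pi-\pi/p$, that the slice inequality reduces to a quadratic with vanishing discriminant (the identity $(Y^{2/p}-1)(Y^{2/p}b-1)=\cos^2(\pi/p)$ is correct and does pin the double root at $r=R$), and that $(R,t^*)$ is a critical point consistent with the formulas for $X$ and $Y$. None of this addresses the actual content of the lemma, which is the global inequality $G(r,t)\ge 0$ on all of $[0,\infty)\times[0,\pi]$. Your stage (iii) is only a plan, and the plan as stated would not close the gap: positive semi-definiteness of the Hessian at the one known critical point gives at most local minimality there (and with a degenerate direction not even that without higher-order analysis), and combining it with the boundary values $G(0,t)\ge 0$ and the growth $G\sim (Y-1)r^p$ does not exclude a second interior critical point at which $G<0$. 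To rule that out you would need either uniqueness of the critical point or a genuine global argument, and producing such an argument is precisely the difficulty; nothing in your sketch supplies it. The alternative route you mention (solve $\partial_r G=0$ implicitly and bound $G(r_*(t),t)$ from below) is likewise the whole problem restated, not a proof.

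For comparison, the paper's proof spends essentially all of its effort on exactly this global step. It first eliminates $b$ in favour of $R$, substitutes $r=e^{\alpha}$-type variables ($\tfrac1r+r=2\cosh\alpha$, $\tfrac1R+R=2\cosh\beta$) to symmetrize, then invokes a key auxiliary inequality (Lemma~\ref{tt}, a refinement of Tao's device for $p=3/2$) replacing $\cos[\tfrac12 p(\pi-t)]$ by a quadratic minorant in $x=-\cos\tfrac{\pi}{p}-\cos t$; after a concavity estimate for $s\mapsto s^{p/2}$ this reduces the two-variable inequality to the nonpositivity of a discriminant $V^2-4UW$, which is then handled by monotonicity in the remaining variable, a limiting case $x=e^{\gamma}$, a case split $\gamma\ge 0$ versus $\gamma<0$, and a sequence of Taylor-series estimates. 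If you want to complete your proposal you would have to supply an argument of comparable strength for stage (iii); as it stands the proposal proves only the sharpness statement, not the inequality.
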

\begin{proof}

The first step is to simplify rather complex inequality \eqref{compG} into \eqref{secondf},  which is suitable for further work.
We first have $$b=\frac{R-R^2 \cos \frac{\pi}{p}}{R-\cos \frac{\pi}{p}}.$$
Then $$X=2 \left(\frac{1}{R}+R-2 \cos \frac{\pi}{p}\right)^{\frac{1}{2} (-2+p)} \sin \frac{\pi}{p}$$ and
$$Y = \left(1-\frac{\cos \frac{\pi}{p}}{R}\right)^{\frac{p}{2}}.$$
Then our main inequality becomes
$$\left(\frac{\left(1+r^2\right) R-\left(r^2+R^2\right) \cos \frac{\pi}{p}}{r R}\right)^{\frac{p}{2}}-\left(\frac{1}{r}+r+2 \cos t\right)^{\frac{p}{2}}$$ $$-2 \left(\frac{1}{R}+R-2 \cos \frac{\pi}{p}\right)^{\frac{1}{2} (-2+p)} \cos \left[\frac{1}{2} p (\pi -t)\right] \sin \frac{\pi}{p}\ge 0$$ which need to hold for every $r,R>0$.

By making  the substitution $\frac{1}{r}+r=2\cosh \alpha$ and $\frac{1}{R}+R=2\cosh \beta$ the previous inequality becomes

$$\left({\cosh \alpha-\cosh(\alpha\pm \beta)  \cos \frac{\pi}{p}}\right)^{\frac{p}{2}}-\left(\cosh \alpha+ \cos t\right)^{\frac{p}{2}}$$ $$- \left(\cosh \beta- \cos \frac{\pi}{p}\right)^{\frac{1}{2} (-2+p)} \cos \left[\frac{1}{2} p (\pi -t)\right] \sin \frac{\pi}{p}\ge 0$$ which need to hold for every $\alpha,\beta\in \mathbb{R}$.

Then the last inequality can be written as \begin{equation}\label{secondf}\begin{split}\bigg(\cosh \alpha&-\cos \frac{\pi}{p} \cosh \gamma\bigg)^{\frac{p}{2}}-(\cos t+\cosh \alpha)^{\frac{p}{2}}\\&- \left(\cosh(\alpha-\gamma)-\cos \frac{\pi}{p}\right)^{\frac{1}{2} (-2+p)} \cos \left[\frac{1}{2} p (\pi -t)\right]\sin \frac{\pi}{p}\geq 0,\end{split}\end{equation} for $0,\gamma\le \alpha$, $t\in[0,\pi]$.
The advantage of \eqref{secondf} is that the equality case occurs precisely when $\gamma=0$ and $t=\pi-\pi/p$.

The main step of the proof of this technical Lemma is a refinement of  Terence Tao's subtle method which works for the special case when $p=3/2$ (\cite{tao}).
The first part of this method  is to replace the trigonometric expressions in $t$ with quadratic expressions in $x = -\cos \frac{\pi}{p}-\cos t$, which is a convenient further change of variable (in particular, it allows us to shift the equality case from $t=\pi-\pi/p$ to $x=0$, and eliminate $x$ shortly afterward).
\begin{lemma}\label{tt}
For $p\in[1,2]$, $t\in[0,\pi]$  and $$q=\frac{p-2 \cot\left[\frac{\pi }{2 p}\right]}{p-p \cos \frac{\pi}{p}}$$ we have
 $$\sin\left(\frac{\pi }{p}\right)\cos \left[\frac{1}{2} p (\pi -t)\right]\le \frac{1}{2} p \left(-\cos \frac{\pi}{p}-\cos t-q \left(-\cos \frac{\pi}{p}-\cos t\right)^2\right) .$$

\end{lemma}
\begin{remark}
In Lemma~\ref{tt} we are comparing two functions that coincide in two points $\pi$ and $\pi-\pi/p$. Moreover, their derivatives coincide in $\pi-\pi/p$ and this was the idea of how we came to the inequality.
\end{remark}
\begin{proof}[Proof of Lemma~\ref{tt}]
Let $a=\cos(\pi-t)=-\cos t$. Then $a\in[-1,1]$ when $t\in [0,\pi]$.
We need to show that $\Phi(a)\ge 0$ where $$\Phi(a) = \frac{1}{2} p \left(a-\cos \frac{\pi}{p}-\frac{\left(a-\cos \frac{\pi}{p}\right)^2 \left(p-2 \cot\left[\frac{\pi }{2 p}\right]\right)}{p-p \cos \frac{\pi}{p}}\right)$$ $$-\cos \left[\frac{1}{2} p \cos^{-1}(a)\right] \sin \frac{\pi}{p}.$$
Then $$\Phi'''(a) = \frac{p \sin \frac{\pi}{p} \left(6 a \sqrt{1-a^2} p \cos \left[\frac{1}{2} p t\right]+\left(-4+p^2-a^2 \left(8+p^2\right)\right) \sin\left[\frac{1}{2} p t \right]\right)}{8 \left(1-a^2\right)^{5/2}}$$ where $t=\arccos a$,  and this function is negative because

\begin{equation}\label{phip}\phi(p):=-4+p^2-\left(8+p^2\right) \cos^2 t+3 p \cot\left[\frac{p t}{2}\right] \sin(2t)\le 0.\end{equation}
Let us prove \eqref{phip}.

It is enough to prove that  $\phi$ is convex and $\phi(1)<0$ and $\phi(2)=0$. First of all $$\phi(1)=-3-9 \cos^2 t+3 \cot\left[\frac{t}{2}\right] \sin(2t)=-12 \sin\left[\frac{t}{2}\right]^4$$ and this is clearly negative for $t\in(0,\pi)$. Also,  straightforward calculations yield $\phi(2)=0$.

To prove that $\phi$ is convex we calculate $$\phi'''(p)=-\frac{3}{4} t^2\csc^4\left[\frac{p t}{2}\right]\sin(2t) (p t (2+\cos [p t])-3 \sin[p t])$$ and thus $\Phi'(p)$ is convex for $t\ge \pi/2$ and concave for $t\le \pi/2$.

Fix now $t$. If $\phi'$ is concave, because

\begin{equation}\label{prima}\phi'(1)=2 \cot\left[\frac{t}{2}\right] (-3 t \cos t+\sin t+\sin(2t))>0\end{equation} and

\begin{equation}\label{seconda}\phi'(2)=5+\cos (2t)-6 t \cot\,t>0,\end{equation} it follows that $\phi'(p)> 0$ for $p\in[1,2]$.
To prove \eqref{prima}, observe that $$\varphi(t):=-3 t \cos t+\sin t+\sin(2t)=\sum_{m=0}^\infty\frac{2 (-1)^m \left(-1+4^m-3 m\right)}{(2 m+1)!}t^{2m+1}.$$ Further two consecutive
members $m=2k$ and $m=2k+1$ of the previous sum are $$\frac{2\text{  }\left(-1+4^{2 k}-6 k\right) t^{1+4 k}}{(4 k+1)}-\frac{2\text{  }\left(-1+4^{1+2 k}-3 (1+2 k)\right) t^{3+4 k}}{(4 k+3)!},$$ and the last quantity is $\ge 0$ if and only if $$ 2 \left(-1+16^{ k}-6 k\right)-\frac{\left(-1+4 \cdot 16^{ k}-3 (1+2 k)\right) t^2}{(1+2 k) (3+4 k)}>0.$$ Since $t<4$, it is enough to prove $$\left(58+40 k-136 k^2-96 k^3\right)+\left(-58+20 k+16 k^2\right) 16^k>0.$$ Since $16^k>1 +15 k$, it will be sufficient to check  that $$-810 k+180 k^2+144 k^3>0,$$ and this is certainly true for $k\ge 2$. In particular, we arrive at the inequality

$$\varphi(t)\ge \frac{3 t^5}{20}-\frac{3 t^7}{140}+\frac{3 t^9}{2240}-\frac{t^{11}}{19800}\ge 0$$ which is true for $t\in[0,\pi]$.
Similarly, we prove \eqref{seconda}: It is clear that $\phi'(2)>0$ for $t\ge \pi/2$. For $t\le \pi/2$ we use Taylor extension  $$\sin t\phi'(2) = \sum_{m=2}^\infty \frac{3 (-1)^m \left(-1+9^m-8 m\right)}{2 (2m+1)!}t^{2m+1}$$ which is greater or equal to  $\frac{4 t^5}{5}-\frac{22 t^7}{105}\ge 0$ for $t\in[0,\pi/2]$.

 Further
 \begin{equation}\label{pa2}\phi''(1)=\csc^2\frac{t}{2}\chi(t),\end{equation} where
\begin{equation}\label{pa}\chi(t)=\left(\sin^2 t+\cos t \left(3 t^2 (1+\cos t)-\sin t (6 t+\sin t)\right)\right).\end{equation} Let us show that $\phi''(1)>0$. First,  the expression $\chi (t)$ is positive for $t\ge \pi/2$ as a sum of two positive functions. If $t\in[0,\pi/2]$, then we take the Taylor series of $\cos t$ and $\sin t$ up to some order and get the following estimates  \begin{equation}\label{est1}t-\frac{t^3}{6}\le \sin t\le t - \frac{t^3}{6} + \frac{t^5}{120}\end{equation} and \begin{equation}\label{est2}\cos t\ge 1-\frac{t^2}{2}+\left(\frac{2 \left(-8+\pi ^2\right)}{\pi ^4}\right)t^4.\end{equation}
 Let us prove the non-trivial estimate \eqref{est2}. Let $$h(t)=-1+\frac{t^2}{2}-\frac{2 \left(-8+\pi ^2\right) t^4}{\pi ^4}+\cos t,$$ and consider $g(t) =\frac{h'(t)}{t^3}.$ We have that $$g'(t)=\frac{-t (2+\cos t)+3 \sin t}{t^4}=\sum_{k=0}^\infty  \left(-\frac{2+4 k}{(4 k+5)!}+\frac{4 (1+k) t^2}{(4 k+7)!}\right) t^{1+2k}.$$ Since $$\left(-\frac{2+4 k}{(4 k+5)!}+\frac{4 (1+k) t^2}{(4 k+7)!}\right) \le\frac{1}{(4k+5)!} \left(-(2+4 k)+\frac{4 (1+k) \pi^2/4}{(7+4 k)(6+4k)}\right)<0, $$ it follows that $g$ is decreasing. Since $g(0)=\frac{1}{6}+\frac{64}{\pi ^4}-\frac{8}{\pi ^2}>0$ and $g(\pi/2)=-\frac{4 \left(-16+2 \pi +\pi ^2\right)}{\pi ^4}<0$. There is only one point $t_\circ\in(0,\pi/2)$ so that $g(t_\circ)=h'(t_\circ)=0$. Since $h(\frac{\pi}{4})=-\frac{15}{16}+\frac{1}{\sqrt{2}}+\frac{3 \pi ^2}{128}>0$. It follows that $t_\circ$ is a local maximum of $h$. Since $h(0)=h(\pi/2)=0$, this implies that $h(t)\ge 0$ for $t\in[0,\pi/2]$.

 Then \eqref{pa}, \eqref{est1} and \eqref{est2} imply  that  $$\chi(t)\ge t^6\zeta(t)$$ where
 \[\begin{split}\zeta(t)&=\left(\frac{1}{60}-\frac{32}{\pi ^4}+\frac{4}{\pi ^2}\right) +\left(\frac{1}{20}+\frac{80}{3 \pi ^4}-\frac{10}{3 \pi ^2}\right) t^2\\&+\left(-\frac{7}{4800}+\frac{768}{\pi ^8}-\frac{192}{\pi ^6}+\frac{608}{45 \pi ^4}-\frac{17}{90 \pi ^2}\right) t^4\\&+\frac{\left(-1280+160 \pi ^2+\pi ^4\right) t^6}{28800 \pi ^4}+\frac{\left(8-\pi ^2\right) t^8}{7200 \pi ^4}.\end{split}\] Then
 $$\zeta(t)\approx \left(0.09344 -0.0139778 t^2-0.000663 t^4+0.000141 t^6-0.0000026657 t^8\right)$$ which is apparently positive for $t\in[0,\pi/2]$.

Since $\phi'$ is convex, for $p>1$ we obtain that  $\phi''(p)>\phi''(1)>0$. Thus  $\phi$ is convex and hence $\phi(p)<0$ because  $\phi(1)<0$ and $\phi(2)=0$.

 The conclusion is that $$\Phi'(a)=\frac{1}{2} p \left(1+\frac{2 \left(a-\cos \frac{\pi}{p}\right) \left(p-2 \cot\left[\frac{\pi }{2 p}\right]\right)}{p \left(-1+\cos \frac{\pi}{p}\right)}-\frac{\sin \frac{\pi}{p} \sin\left[\frac{1}{2} p \cos^{-1}(a)\right]}{\sqrt{1-a^2}}\right)$$ is concave. Since $\Phi'(\cos(\pi/p))=0$, it follows that there is at most one more point $-1<\bar a <1$ so that $\Phi'(\bar a)=0$. We now have \begin{equation}\label{xp}\Phi''(\cos(\frac{\pi}{p}))=\frac{1}{4} \csc^3\left(\frac{\pi }{p}\right) \left(32 \cos^4\left[\frac{\pi }{2 p}\right]-4 p \sin \frac{\pi}{p}-3 p \sin\left[\frac{2 \pi }{p}\right]\right)>0.\end{equation}  In order to prove \eqref{xp} observe that, after taking the change $p=\pi/x$, it is equivalent with $$\left(32 x \cos^4\left[\frac{x}{2}\right]-4 \pi  \sin x-3 \pi  \sin(2x)\right)\ge 0, \ \  x\in[\pi/2,\pi].$$ The last relation can be written as $$8 x (1+\cos x)^2-2 \pi  (2+3 \cos x) \sin x\ge 0.$$

After the change $x=y+\pi/2$ we arrive at the inequality $$h(y):=-2 \pi  \cos y(2-3 \sin y)+8 \left(\frac{\pi }{2}+y\right) (1-\sin y)^2\ge 0.$$ Now it is clear that for $y\in[\arcsin \frac{2}{3},\pi/2]$, $h(y)\ge 0$. For $y\in[0,\arcsin \frac{2}{3}]$, we use the inequality $$h(y)\ge k(y):=-2 \pi  (2-3 \sin y)+8 (1-\sin y)^2 \left(\frac{\pi }{2}+\sin y\right)=h(\sin y),$$ where $h(t)=-2 \pi  (2-3 t)+8 (1-t)^2 \left(\frac{\pi }{2}+t\right)$. Since $h(t)\ge 0$ for $t\in[0,2/3]$,  \eqref{xp} is proved.

Since $\Phi''$ is decreasing, it follows that $\bar a>\cos \pi/p$.  Then $\Phi$ is decreasing in $[-1,\cos(\pi/p)]$, increasing in $[\cos(\pi/p),\bar a]$ and decreasing in $[\bar a, 1]$. Since $\Phi(1)=0$ and $\Phi(\cos(\pi/p))=0$, it follows that $\Phi(a)\ge 0$ for every $a$. This finishes the proof of the lemma.
\end{proof}
{\bf Continuation of proof of Lemma~\ref{nice1}}
We begin with the required inequality
$$\left(\cosh \alpha-\cos \frac{\pi}{p} \cosh \gamma\right)^{\frac{p}{2}}-(\cos t+\cosh \alpha)^{\frac{p}{2}}$$ $$- \left(\cosh(\alpha-\gamma)-\cos \frac{\pi}{p}\right)^{\frac{1}{2} (-2+p)} \cos \left[\frac{1}{2} p (\pi -t)\right]\sin \frac{\pi}{p}\geq 0,$$ for $0,\gamma\le \alpha$.
Let $x=-(\cos t + \cos \frac{\pi}{p})$.
From Lemma~\ref{tt}, it is enough to prove \[\begin{split}&\left(\cosh\alpha -\cos \frac{\pi}{p} \cosh\gamma  \right)^{\frac{p}{2}}-\left(-x-\cos \frac{\pi}{p}+\cosh\alpha \right)^{\frac{p}{2}}\\&\ge \left(-\cos \frac{\pi}{p}+\cosh [\alpha -\gamma ]\right)^{\frac{p}{2}-1} \left(\frac{p x}{2}-\frac{x^2 \left(p-p \cos \frac{\pi}{p}-2 \sin \frac{\pi}{p}\right)}{2 \left(1-\cos \frac{\pi}{p}\right)^2}\right)\end{split}\]
where $x=-\cos\frac{\pi }{p}-\cos t\in[-\cos\frac{\pi }{p}-1,-\cos\frac{\pi }{p}+1]$.
\\
From concavity of $s\to s^{\frac{p}{2}}$ we have $$-\left(-x-\cos \frac{\pi}{p}+\cosh\alpha \right)^{\frac{p}{2}}\ge -\left(\cosh\alpha-\cos \frac{\pi}{p} \right)^{\frac{p}{2}}+\frac{p}{2} \left(\cosh\alpha-\cos \frac{\pi}{p} \right)^{\frac{p}{2}-1}x.$$
It is enough to prove that \[\begin{split}\frac{1}{2} p x &\left(\cosh\alpha-\cos \frac{\pi}{p} \right)^{\frac{p}{2}-1}-\left(\cosh\alpha-\cos \frac{\pi}{p} \right)^{\frac{p}{2}}\\&+\left(\cosh\alpha -\cos \frac{\pi}{p} \cosh\gamma  \right)^{\frac{p}{2}}\\&\geq \left(-\cos \frac{\pi}{p}+\cosh [\alpha -\gamma ]\right)^{\frac{p}{2}-1} \left(-\frac{p x}{2}-\frac{x^2 \left(p-p \cos \frac{\pi}{p}-2 \sin \frac{\pi}{p}\right)}{2 \left(1-\cos \frac{\pi}{p}\right)^2}\right) .\end{split}\]
Further $$\left(\cosh\alpha -\cos \frac{\pi}{p} \cosh\gamma  \right)^{\frac{p}{2}}-\left(\cosh\alpha-\cos \frac{\pi}{p} \right)^{\frac{p}{2}}$$ $$>\frac{1}{2} p \cos \frac{\pi}{p} \left(\cosh\alpha-\cos \frac{\pi}{p} \right)^{\frac{p}{2}-1} (1-\cosh\gamma  ).$$
Thus we need to show that $Wx^2+V x+U>0$ where $$W=\frac{\left(\cosh(\alpha-\gamma)-\cos \frac{\pi}{p}\right)^{\frac{p}{2}-1} \left(p-p \cos \frac{\pi}{p}-2 \sin \frac{\pi}{p}\right)}{2 \left(1-\cos \frac{\pi}{p}\right)^2},$$
$$V=\frac{1}{2} p \left(\cosh \alpha-\cos \frac{\pi}{p}\right)^{\frac{p}{2}-1}-\frac{1}{2} p \left(\cosh(\alpha-\gamma)-\cos \frac{\pi}{p}\right)^{\frac{p}{2}-1}$$
and $$U=\frac{p \cos \frac{\pi}{p} \left(\cosh \alpha-\cos \frac{\pi}{p}\right)^{\frac{p}{2}} (-1+\cosh \gamma)}{2 \left(\cos \frac{\pi}{p}-\cosh \alpha\right)}.$$
Since $W>0$, it is clear that it is enough to prove that the  discriminant is negative. Now $$V^2- 4 U W<0 $$ if and only if \[\begin{split}G(X)&:=\frac{1}{4} p \left(1-\left(\frac{1}{X}\right)^{1-\frac{p}{2}}\right)^2 X^{1-\frac{p}{2}}\\&+\frac{\cos \frac{\pi}{p} (-1+\cosh \gamma) \left(p-p \cos \frac{\pi}{p}-2 \sin \frac{\pi}{p}\right)}{\left(1-\cos \frac{\pi}{p}\right)^2}\le 0\end{split}\] where $$X=X(\alpha)=\frac{-\cos \frac{\pi}{p}+\cosh \alpha}{-\cos \frac{\pi}{p}+\cosh(\alpha-\gamma)}.$$
It turns out that we need to split into two cases $\gamma\ge 0$ and $\gamma<0$.

\subsection{\bf The case $\gamma\ge 0$} First of all

$$G'(X)=\frac{1}{8} (2-p) p X^{-2-\frac{p}{2}} \left(X^2-X^p\right)\ge 0$$ for $ X\ge 1$. Further $$X'(\alpha)=\frac{\cos \frac{\pi}{p} (-\sinh \alpha+\sinh(\alpha-\gamma))+\sinh \gamma}{\left(\cos \frac{\pi}{p}-\cosh(\alpha-\gamma)\right)^2}\ge 0$$ and so $$G(X)\le G(\lim_{\alpha\to \infty } X(\alpha))=G(e^\gamma).$$
Let $\gamma=\log x$. It remains to prove that \begin{equation}\label{goal}\frac{1}{4} \left(\frac{p x^{-1-\frac{p}{2}} \left(x-x^{\frac{p}{2}}\right)^2}{-2+\frac{1}{x}+x}+\frac{2 \cos \frac{\pi}{p} \left(p-p \cos \frac{\pi}{p}-2 \sin \frac{\pi}{p}\right)}{\left(1-\cos \frac{\pi}{p}\right)^2}\right)\le 0\end{equation} for $x\ge 1$ and $p\in[1,2]$. Now we prove that  \begin{equation}\label{expression}\sup_{x>0,x\neq 1}\frac{x^{-1-\frac{p}{2}} \left(x-x^{\frac{p}{2}}\right)^2}{-2+\frac{1}{x}+x}=\lim_{x\to 1}\left(\frac{x^{1-p/4}-x^{p/4}}{x-1}\right)^2=\frac{1}{4} (2-p)^2.\end{equation}

To prove the first equality in \eqref{expression}, assume first that $x>1$. Then we need to show that $$\phi(x):=-\left(1-\frac{p}{2}\right) (-1+x)+x^{1-\frac{p}{4}}-x^{p/4}\le 0.$$ Since $$\phi''(x)=\frac{1}{16} (4-p) p x^{-2-\frac{p}{4}} \left(-x+x^{\frac{p}{2}}\right)\le 0,$$ it follows that $\Phi'$ is decreasing. In particular, $\phi'(x)<\phi'(1)=0$ for $x>1$. Thus $\phi$ is decreasing and hence $\phi(x)\le \phi(1)=0$, what we wanted to prove. The case $x<1$ can be treated similarly.

So it remains to show that $$\frac{1}{4} (-2+p)^2+\frac{2 \cos \frac{\pi}{p} \left(p-p \cos \frac{\pi}{p}-2 \sin \frac{\pi}{p}\right)}{\left(1-\cos \frac{\pi}{p}\right)^2}\le 0$$ for $p\in[1,2]$. This inequality is equivalent to the inequality

\begin{equation}\label{sabato}s^2 (1+\sin s)^2-(\pi +2 s) \sin s \left[\pi -(\pi +2 s) \cos s+\pi  \sin s\right]\le 0\end{equation} for $s=\frac{\pi}{p}-\frac{\pi}{2}\in[0,\pi/2]$.

We first have that \begin{equation}\label{18sept}\psi(s):=\pi -(\pi +2 s) \cos s+\pi  \sin s\ge ( (-2+\pi ) s+\frac{\pi  s^2}{2}).\end{equation}
To prove \eqref{18sept}, since $$\sin s\ge s-\frac{s^3}{6}\text{ and }\cos s\le 1-\frac{s^2}{2}+\frac{s^4}{24},$$ we have that
\[\begin{split}
\psi(s)&\ge \pi -(-2+\pi ) s-\frac{\pi  s^2}{2}+\pi  \left(s-\frac{s^3}{6}\right)-(\pi +2 s) \left(1-\frac{s^2}{2}+\frac{s^4}{24}\right)
\\& =\frac{1}{24} s^3 \left(24-2 s^2-\pi  (4+s)\right)\ge0, \ \ s\in[0,\pi/2].\end{split}\] This implies \eqref{18sept}.
So to prove \eqref{sabato}, it is enough to prove the inequality $$s^2 (1+\sin s)^2-(\pi +2 s) \sin s ((-2+\pi ) s+\frac{\pi  s^2}{2})\le 0.$$ Since $$t-\frac{t^3}{6}\le \sin t\le t,$$ it remains to prove the inequality $$\omega(s):=\left(1+2 \pi -\pi ^2\right) +\left(6-2 \pi -\frac{\pi ^2}{2}\right) s+\frac{1}{6} \left(6-8 \pi +\pi ^2\right) s^2$$ $$+\frac{1}{12} \left(-8+4 \pi +\pi ^2\right) s^3+\frac{\pi  s^4}{6}\le 0. $$

We have that $$\omega'''(s)=\frac{1}{2} \left(-8+4 \pi +\pi ^2\right)+4 \pi  s>0.$$ So $\omega'(s)$ is convex. In particular, $\omega'(s)$ can have at most two zeros. Since $\omega'(0)<0$ and $\omega'(\pi/2)>0$. There is only one point $s_\circ \in(0,\pi/2)$ so that $\omega'(s_\circ)=0$ and $s_\circ$ is a local minimum of $\omega$. Since $\omega(0)<0$ and $\omega(\pi/2)<0$, it follows that $\omega(s)<0$ for $s\in[0,\pi/2]$.
This finishes \eqref{goal} and this case is finished.

\subsection{\bf Case $\gamma< 0$} In this case $G'(X)\le 0$ and $X'(a)\ge 0$. So $$G(X(\alpha))\le G(X(0))=G\left(\frac{1-\cos \frac{\pi}{p}}{\cosh \gamma-\cos \frac{\pi}{p} }\right)\le G(e^\gamma),$$ because $$\frac{1-\cos \frac{\pi}{p}}{{\cosh \gamma}-\cos \frac{\pi}{p}}\ge e^\gamma,$$ for $\gamma\le 0$.  In this case $\gamma<0$, but we use again \eqref{expression}, and the inequality is proved.
\end{proof}


\begin{thebibliography}{1}
\bibitem{essen}
\textsc{M. Ess\'en,} \emph{A superharmonic proof of the M. Riesz conjugate function theorem.} Ark. Mat. 22 (1984), no. 2, 241--249.

\bibitem{studia}
\textsc{ B. Hollenbeck,  N. J. Kalton, I. E. Verbitsky, }
\emph{Best constants for some operators associated with the Fourier and Hilbert transforms. }
Studia Math. 157 (2003), no. 3, 237--278.
\bibitem{verb1}
\textsc{B. Hollenbeck, I. E. Verbitsky,} \emph{Best constants for the Riesz projection.} J. Funct. Anal. 175 (2000), no. 2, 370--392.
\bibitem{verb2}
\textsc{B. Hollenbeck, I. E. Verbitsky,} \emph{
Best constant inequalities involving the analytic and co-analytic projection.} Operator Theory: Advances and Applications 202, 285-295 (2010).
\bibitem{lars}
\textsc{L. H\"ormander,} \emph{An introduction to complex analysis in several variables.} D. Van Nostrand Co., Inc., Princeton, N.J.-Toronto, Ont.-London 1966 x+208 pp.
\bibitem{graf}
\textsc{L. Grafakos,}
\emph{Best bounds for the Hilbert transform on $L^p(\mathbf{R})$.}
Math. Res. Lett. 4 (1997), no. 4, 469--471.
\bibitem{tams}
\textsc{D.  Kalaj,} \emph{On Riesz type inequalities for harmonic mappings on the unit disk,} Trans. Am. Math. Soc.
 372, No. 6, 4031--4051 (2019).
\bibitem{mel} \textsc{P. Melentijevi\'c,} \emph{Hollenbeck-Verbitsky conjecture on best constant inequalities for analytic and co-analytic projections.} Math. Ann. (2023). https://doi.org/10.1007/s00208-023-02639-1
\bibitem{melmar}
\textsc{P. Melentijevi\'c, M. Markovi\'c,} \emph{Best Constants in Inequalities Involving Analytic and Co-Analytic Projections and Riesz's Theorem in Various Function Spaces.} Potential Anal (2022). https://doi.org/10.1007/s11118-022-10021-0
\bibitem{pik}
\textsc{S. K. Pichorides, S.}
\emph{On the best values of the constants in the theorems of M. Riesz, Zygmund and Kolmogorov.}
Collection of articles honoring the completion by Antoni Zygmund of 50 years of scientific activity, II.
Studia Math. 44 (1972), 165--179.
\bibitem{tao}
\textsc{T. Tao,} \emph{https://mathoverflow.net/questions/454007}
\bibitem{ver}
\textsc{I. E. Verbitsky,}
\emph{Estimate of the norm of a function in a Hardy space in terms of the norms of its
real and imaginary parts.}
Linear operators.
Mat. Issled. No. 54 (1980), 16--20, 16--165, "Fifteen Papers on Functional Analysis," Amer. Math. Soci.
Transl. Ser. 124 (1984), 11--15 (English transl.)
\end{thebibliography}
\end{document}